\newcommand{\R}{\mathbb{R}}
\newcommand{\Q}{\mathbb{Q}}
\newcommand{\Z}{\mathbb{Z}}
\DeclareMathOperator{\Diff}{Diff}
\DeclareMathOperator{\Homeo}{Homeo}
\newtheorem{theorem}{Theorem}
\newtheorem{lemma}[theorem]{Lemma}
\theoremstyle{definition}
\newtheorem{remark}[theorem]{Remark}
\newenvironment{example}[1][Example]{\begin{trivlist}
\item[\hskip \labelsep {\bfseries #1}]}{\end{trivlist}}
\begin{document}

\title[Families with strictly monotone rotation number]{One-parameter families of circle diffeomorphisms with strictly monotone rotation number}
\author{Kiran Parkhe}

\begin{abstract}
We show that if $f \colon S^1 \times S^1 \to S^1 \times S^1$ is $C^2$, with $f(x, t) = (f_t(x), t)$, and the rotation number of $f_t$ is equal to $t$ for all $t \in S^1$, then $f$ is topologically conjugate to the linear Dehn twist of the torus $\left( \begin{smallmatrix} 1&1\\ 0&1 \end{smallmatrix} \right)$.  We prove a differentiability result where the assumption that the rotation number of $f_t$ is $t$ is weakened to say that the rotation number is strictly monotone in $t$.
\end{abstract}

\maketitle

\section{Introduction}

In this article we consider homeomorphisms and diffeomorphisms of the circle $S^1 = \R/\Z$.  They are always assumed to preserve orientation.  One of the most important concepts in the classification of circle homeomorphisms is rotation number.  Rotation number can be defined in the following way.

Let $f$ be a homeomorphism of the circle, and let $\tilde{f} \colon \R \to \R$ be a lift of $f$ (that is, $\pi(\tilde{f}(x)) = f(\pi(x))$ for all $x \in \R$, where $\pi$ is the canonical projection).  Define the translation number to be $\tau(\tilde{f}) = \displaystyle\lim_{n \to \infty} \frac{\tilde{f}^n(0)}{n}$; it is a fact that this limit will always exist.  The value of $\tau(\tilde{f})$ does depend on the choice of lift, but for another lift $\tilde{f}'$ we will have $\tau(\tilde{f}) - \tau(\tilde{f}') \in \Z$.  Therefore, we define the rotation number of $f$ to be $\rho(f) = \pi(\tau(\tilde{f}))$; this does not depend on the choice of lift.  We will write rotation numbers as if they were real numbers, understanding that when we write $\rho(f) = x$ we really mean $\rho(f) = x + \Z$.

There are strong connections between the rotation number and the dynamics of $f$.  The rotation number is $0$ if and only if $f$ has at least one fixed point.  More generally, $\rho(f) \in \Q$ if and only if $f$ has a periodic orbit.  Denjoy's theorem says that if $f$ is a $C^1$ diffeomorphism such that the derivative $f'$ has bounded variation (in particular, if $f$ is $C^2$), and $f$ has irrational rotation number, then $f$ is topologically conjugate to an irrational rotation.  Note that a similar statement is false in the case of rational rotation number (even under strong smoothness assumptions): having rational rotation number implies that there exists a periodic orbit, not the much stronger statement that every orbit is periodic with the same period (which is equivalent to being conjugate to a rational rotation).

Rotation number is a function $\rho \colon \Homeo_+(S^1) \to \R/\Z$, where $\Homeo_+(S^1)$ is the group of orientation-preserving homeomorphisms of the circle.  Note that the function $\rho$ is not a homomorphism.  One can consider the restriction of $\rho$ to the subgroups $\Diff_+^r(S^1), 1 \leq r \leq \omega$.  We may endow these with the $C^r$ topology (for $\Homeo_+(S^1)$, $r = 0$), and study the function $\rho$ by investigating subspaces on which $\rho$ takes on certain values.  For example, $\rho^{-1}(\Q)$ contains an open and dense subset of $\Homeo_+(S^1)$, and $\rho^{-1}(\Q) \cap \Diff_+^r(S^1)$ contains an open and dense subset of $\Diff_+^r(S^1)$ for all $r \geq 1$ (see \cite{Herman2}).



One can gain some insight into the local structure of $\rho$ by considering it on 1-parameter families of homeomorphisms of the circle.  One way to do this is to let $f$ be a homeomorphism, and then define a family $f_t$ by $f_t = R_t \circ f$, where $R_t$ is rotation by $t$.  Letting $\rho(t) = \rho(f_t)$, the behavior of $\rho(t)$ is very different when its value is rational than when its value is irrational.  For any $\alpha \notin \Q$, $\rho^{-1}(\alpha)$ will be a single point.  This can be seen in the following way.  Without loss of generality, $\rho(f) = \alpha$.  It suffices to show that for arbitrarily small $t > 0$, and arbitrarily small $t < 0$, $f_t$ has periodic points.

Since $\rho(f) = \alpha \notin \Q$, either $f$ is conjugate to an irrational rotation, in which case every orbit is dense; or it is semi-conjugate to an irrational rotation, in which case there exists a unique invariant Cantor set on which the dynamics are minimal.  In the first case, let $x \in S^1$ be arbitrary; since the orbit of $x$ accumulates on $x$ on both sides, and any finite subset of the orbit depends continuously on $t$, for arbitrarily small $t$ (both positive and negative) composing by $R_t$ will ``close up'' the orbit of $x$, so that $f_t$ will have periodic points.  If $f$ is merely semi-conjugate to $R_\alpha$, let $x$ be in the invariant Cantor set; choose it to be an accumulation point from the left, if we want to find small $t > 0$ such that $f_t$ has periodic points, and similarly for $t < 0$.  Once we find such $x$, we argue as above.

On the other hand, for a generic (countable intersection of open and dense sets) $f \in \Homeo_+(S^1)$ or $\Diff_+^r(S^1)$ in the $C^r$ topology ($0 \leq r \leq \omega$), for every $\frac{p}{q} \in \Q$, $\rho^{-1}(\frac{p}{q})$ is a non-trivial interval (\cite{Herman}, p. 37).

The most famous example where $\rho^{-1}(\frac{p}{q})$ is a nontrivial interval for all $\frac{p}{q} \in \Q$, due to Arnold (as cited in Ghys \cite{Ghys}, p. 358), is the following: $f_a(x) = x + a\cdot sin(2\pi x)$, where $a$ is chosen to be small enough so that $f$ is a homeomorphism.  As above, $(f_a)_t$ is defined to be $R_t \circ f_a$.  For $\frac{p}{q} \in \Q$, the set of $(t_0, a_0)$ in the $(t, a)$-plane such that $\rho((f_{a_0})_{t_0}) = \frac{p}{q}$ will be a strange ``tongue'' shape.  These Arnold tongues are much studied and quite important.

\vspace{12pt}

In this paper, we consider a different type of 1-parameter family of circle homeomorphisms.  We make the strong, rather unusual assumption that the rotation number is a \textit{strictly} monotone function of the parameter; in particular, it has no ``plateaus'' when its value is rational, unlike in the above situations.  We draw some strong conclusions in this context.

The first result says that, in the special case where the rotation number is not just monotone, but the identity, the behavior is highly constrained.

\begin{theorem}
\label{ConjToLinear}
Suppose that $f \colon S^1 \times S^1 \to S^1 \times S^1$ is $C^2$, with $f(x, t) = (f_t(x), t)$, and the rotation number of $f_t$ is equal to $t$ for all $t \in S^1$.  Then $f$ is topologically conjugate to the linear Dehn twist of the torus $\left( \begin{smallmatrix} 1&1\\ 0&1 \end{smallmatrix} \right)$.
\end{theorem}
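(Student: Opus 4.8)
The plan is to construct a conjugacy of the form $H(x,t)=(h_t(x),t)$, with each $h_t\in\Homeo_+(S^1)$ and $t\mapsto h_t$ continuous. Writing $T(x,t)=(x+t,t)$ for the linear Dehn twist, the equation $H\circ f=T\circ H$ is equivalent to asking that $h_t\circ f_t=R_t\circ h_t$ for all $t$; that is, each $h_t$ must conjugate $f_t$ to the rotation $R_t$, and $t\mapsto h_t$ must be continuous (uniformly in $x$, which is automatic for degree-one monotone maps once pointwise continuity holds). As a consistency check, $f$ is homotopic to $T$: going once around the $t$-circle increases the rotation number of $f_t$ by $1$, so $f$ acts on $\pi_1(S^1\times S^1)=\Z^2$ exactly as $\left( \begin{smallmatrix} 1&1\\ 0&1 \end{smallmatrix} \right)$. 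Thus the argument has two parts: (a) each $f_t$ is topologically conjugate to $R_t$; and (b) the conjugacies can be chosen to depend continuously on $t$.

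Part (a) on the irrational fibers is Denjoy's theorem: $f$ being $C^2$, each $f_t$ is a $C^2$ diffeomorphism, so for $t\notin\Q$ it is conjugate to $R_t$. For a rational fiber $t=p/q$ in lowest terms I would show $f_{p/q}^{\,q}=\mathrm{id}$ (equivalently, $f_{p/q}$ is conjugate to $R_{p/q}$). Suppose not; then $g:=f_{p/q}^{\,q}$ has fixed points but is not the identity, so on some open arc $g-\mathrm{id}$ has a constant nonzero sign, say positive, and we may pick $x_*$ there with $g(x_*)>x_*$. Choose a continuous family of lifts $F_t$ of $f_t$ near $p/q$ with $\tau(F_t)=t$; then $G_t:=F_t^{\,q}-p$ is a lift of $f_t^{\,q}$ with $\tau(G_t)=qt-p$, with $G_t\to G_{p/q}$ uniformly on compacta, and $G_{p/q}(x_*)>x_*$. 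Taking $t<p/q$ with $qt-p$ irrational and close to $0$, the map $f_t^{\,q}$ has no periodic points and $\tau(G_t)\in(-1,0)$, which forces $G_t(x)<x$ for every $x$; this contradicts $G_t(x_*)\to G_{p/q}(x_*)>x_*$ (if instead $g-\mathrm{id}<0$ on the arc, use $t>p/q$ and $\tau(G_t)\in(0,1)$). Hence $f_{p/q}^{\,q}=\mathrm{id}$; since $\rho(f_{p/q})=p/q$ with $\gcd(p,q)=1$, this is a cyclic action of period exactly $q$, hence conjugate to $R_{p/q}$ by averaging the group action. So $f_t\sim R_t$ for all $t$.

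For part (b): on an irrational fiber $f_t$ is uniquely ergodic with invariant probability measure $\mu_t$, and $h_t(x):=\mu_t([0,x))$ conjugates $f_t$ to $R_t$ and fixes $0$. Weak-$*$ compactness of probability measures, unique ergodicity, and the uniform convergence $f_{t_n}\to f_t$ as $t_n\to t$ together show $t\mapsto\mu_t$ is continuous at every irrational $t$; hence if we let $\mu_{p/q}$ be any $f_{p/q}$-invariant measure, continuity of $t\mapsto\mu_t$ at irrational $t$ is automatic and the whole question becomes whether $\mu_t$ converges as $t\to p/q$ through the irrationals (and, if so, whether the limit is non-atomic with full support). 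Here one exploits part (a): since $f_t^{\,q}\to f_{p/q}^{\,q}=\mathrm{id}$ in $C^2$, writing $G_t=\mathrm{id}+\delta_t$ on the lifts, $\delta_t/(qt-p)$ converges uniformly to a nonnegative $C^1$ function $\Psi$ (proportional to $\partial_t(F_t^{\,q})|_{t=p/q}$), the derivative being two-sided because $F_t^{\,q}$ is jointly $C^2$. Because the invariant measure of a near-identity diffeomorphism $\mathrm{id}+\delta_t$ concentrates where the displacement is smallest — and, when absolutely continuous, has density proportional to $1/\delta_t$ — one expects $\mu_t$ to converge, from both sides of $p/q$ and to the same limit, to the probability measure with ``density'' proportional to $1/\Psi$.

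The hard part, which I expect to be the main obstacle, is showing $\Psi>0$ everywhere. One gets $\Psi\geq 0$ for free from $\delta_t>0$ for $t>p/q$ and $\delta_t<0$ for $t<p/q$, but strict positivity is not formal, and this is exactly where the hypothesis that $\rho(f_t)$ is \emph{strictly} monotone (with no rational plateaus) must enter beyond the rational fibers: if $\Psi$ had a zero at a point $x_0$, then $f_t^{\,q}$ would become anomalously slow near the orbit of $x_0$ as $t\to p/q$, the measures $\mu_t$ would concentrate there, the limiting measure would acquire an atom or lose full support, and $h_t$ could not converge to a homeomorphism — impossible if the conjugacy exists. I would try to derive from a zero of $\Psi$ that $\rho(f_t)$ is locally constant on some arc of parameters (using that every nearby rational fiber is also conjugate to a rotation), contradicting strict monotonicity. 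Granting $\Psi>0$: $\mu_{p/q}$ is non-atomic with full support, $t\mapsto h_t$ is continuous with each $h_t$ a homeomorphism fixing $0$, passing to the limit in $h_t\circ f_t=R_t\circ h_t$ gives $h_{p/q}\circ f_{p/q}=R_{p/q}\circ h_{p/q}$, and $H(x,t)=(h_t(x),t)$ is then a homeomorphism of $S^1\times S^1$ conjugating $f$ to $T$, as desired.
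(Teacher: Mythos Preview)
Your overall architecture matches the paper's: Denjoy on irrational fibers; the argument that $f_{p/q}^{\,q}=\mathrm{id}$ on rational fibers (your argument is essentially the paper's Lemma~5); and the identification of the limiting conjugacy at $t=p/q$ via the function $\Psi=\partial_t(f_t^{\,q})|_{t=p/q}$. Your use of unique ergodicity and invariant measures in place of the paper's direct orbit-tracking (its Lemmas~4 and~7) is a legitimate reformulation and leads to the same limit measure with density proportional to $1/\Psi$.

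The genuine gap is your proposed route to $\Psi>0$. You suggest deducing from a zero of $\Psi$ that $\rho$ is locally constant, contradicting strict monotonicity. This cannot work: the paper gives a $C^\infty$ example with $\rho$ strictly monotone, $\rho'(0)=0$, and the conclusion of the theorem failing. So strict monotonicity alone is insufficient, and a zero of $\Psi$ does \emph{not} force a plateau of $\rho$; at best it forces $\rho'(t_0)=0$. You must use the specific hypothesis $\rho(t)=t$, i.e.\ $\rho'(t_0)=1$, which your write-up never invokes.

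The paper's argument (its Lemma~6) is this. Suppose $\Psi(x_0)=0$. Since $\Psi\geq 0$ and is $C^1$ (here $C^2$ of $f$ is used), $x_0$ is a minimum, so for every $\epsilon>0$ there is $\delta>0$ with $\Psi(x)<\epsilon|x-x_0|$ for $|x-x_0|<\delta$. Then for $t$ near $t_0$ the displacement satisfies $f_t^{\,q}(x)-x\approx \Psi(x)(t-t_0)$, so the orbit of $x_0$ under $f_t^{\,q}$ escapes a $\delta$-neighborhood of $x_0$ only after on the order of $1/\bigl(\epsilon(t-t_0)\bigr)$ iterates. But $\rho(f_t^{\,q})=q(t-t_0)$ forces the orbit to complete a full circle in roughly $1/\bigl(q(t-t_0)\bigr)$ iterates. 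Taking $\epsilon$ small compared to $q$ gives a contradiction. This is precisely where the hypothesis $\rho'(t_0)=1\neq 0$ enters; with only strict monotonicity you cannot close this step.
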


We will see, in the course of the proof, that when $t \in \Q$ the conjugacy between $f_t$ and $R_t$ is actually forced to be differentiable.

We will show by way of example that the result of Theorem 1 does not hold in general if we only assume that the rotation number is strictly monotone.  However, interesting properties continue to hold in that generality, which we explore in Theorem 2.

Note that $\frac{\partial f_t^q}{\partial t}(x, t_0)$ (or $\frac{\partial f_t^q}{\partial t}|_{t = t_0}(x)$ if we wish to vary $x$ and hold $t$ constant) means $pr_1(\frac{\partial f^q}{\partial t}(x, t_0))$, where $pr_1$ is projection onto the first component.

\begin{theorem}
\label{Differentiable}
Suppose that $f \colon S^1 \times [a, b] \to S^1 \times [a, b]$ is $C^2$, with $f(x, t) = (f_t(x), t)$, and the rotation number is a strictly monotone increasing function of $t$.  Let $t_0$ be such that $\rho(t_0) = \frac{p}{q} \in \Q$ (where this fraction is reduced).  Then $\rho$ is differentiable at $t_0$.  Moreover, if $\rho'(t_0) \neq 0$ then $\frac{\partial f_t^q}{\partial t}(x, t_0) \neq 0$ for all $x \in S^1$.  In that case,

$$\rho'(t_0) = \frac{1}{q \cdot \int_{S^1} \frac{1}{\frac{\partial f_t^q}{\partial t}(x, t_0)} dx}.$$
\end{theorem}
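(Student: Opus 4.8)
The plan rests on a rigidity observation and a comparison with a flow on the circle. Let $F_t \colon \R \to \R$ be the $C^2$ family of lifts of $f_t$ whose translation numbers $\tau(F_t)$ form the continuous branch of the rotation number through $\tau(F_{t_0}) = p/q$ (the rotation number is a continuous function on $\Homeo_+(S^1)$ and $t \mapsto f_t$ is $C^0$-continuous, so this branch exists); write $\rho(t) := \tau(F_t)$ for it, and set $G_t := F_t^q - p$, a lift of $f_t^q$, so $\tau(G_t) = q\rho(t) - p$. The first step is to prove that $f_{t_0}^q = \mathrm{id}_{S^1}$, i.e.\ $G_{t_0} = \mathrm{id}$. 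For $t > t_0$, strict monotonicity gives $\tau(G_t) > 0$, so $G_t$ has no fixed point; since $G_t - \mathrm{id}$ is continuous and $1$-periodic it has constant sign, and a negative sign would force $\tau(G_t) < 0$, so $G_t(x) > x$ for all $x$. Letting $t \downarrow t_0$ yields $G_{t_0}(x) \geq x$, and the symmetric argument from $t < t_0$ gives $G_{t_0}(x) \leq x$; hence $G_{t_0} = \mathrm{id}$. Writing $u(x) := \frac{\partial f_t^q}{\partial t}(x, t_0) = \partial_t F_t^q(x)|_{t = t_0}$ (well-defined independently of the lift; it is $C^1$ and $1$-periodic), the $C^2$ hypothesis gives a uniform Taylor estimate $G_t(x) = x + (t - t_0) u(x) + R_t(x)$ with $|R_t(x)| \leq C(t - t_0)^2$; dividing $G_t(x) > x$ (valid for $t > t_0$) by $t - t_0$ and letting $t \downarrow t_0$ shows $u \geq 0$ everywhere.

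Next I would compare $G_t$ with the flow of the vector field $u(x)\,\partial_x$. When $u > 0$ this flow is smoothly conjugate, via $x \mapsto T^{-1}\int_0^x d\xi/u(\xi)$ with $T := \int_{S^1} dx/u(x)$, to the constant-speed flow, so its time-$s$ map $\Phi^u_s$ is a lift of the rotation by $s/T$ and $\tau(\Phi^u_s) = s/T$ \emph{exactly}. One also has the uniform expansion $\Phi^w_s(x) = x + s w(x) + O(s^2 \|w\|_{C^1})$ for a $C^1$ vector field $w$. Rather than perturbing the flow map directly — the rotation number is not Lipschitz in the $C^0$ metric, so an $O(s^2)$ perturbation of $\Phi^u_s$ need not have rotation number $s/T + o(s)$ for that reason alone — I would sandwich. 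Combining the two Taylor estimates, there is a fixed constant $D$ such that, for all $t$ near $t_0$ and all $x$,
$$\Phi^{\,u - D(t - t_0)}_{\,t - t_0}(x) \ \leq\ G_t(x) \ \leq\ \Phi^{\,u + D(t - t_0)}_{\,t - t_0}(x),$$
provided $u > 0$, so that the shifted vector fields stay positive (here one must check both signs of $t - t_0$; the inequalities hold uniformly). Monotonicity of the translation number ($A \leq B$ pointwise $\Rightarrow \tau(A) \leq \tau(B)$, from $A^n \leq B^n$) then squeezes $\tau(G_t)$ between $\frac{t - t_0}{\int_{S^1} dx/(u - D(t - t_0))}$ and $\frac{t - t_0}{\int_{S^1} dx/(u + D(t - t_0))}$, both of which, divided by $t - t_0$, tend to $1/T$ by dominated convergence. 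Hence $t \mapsto \tau(G_t)$ is differentiable at $t_0$ with derivative $1/T$, and since $\rho = (\tau(G_\cdot) + p)/q$ near $t_0$, $\rho'(t_0) = \frac{1}{qT} = \bigl(q\int_{S^1} \frac{dx}{u(x)}\bigr)^{-1}$.

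It remains to treat the case where $u$ vanishes somewhere, which yields the dichotomy. If $u(x_\ast) = 0$ then, since $u \geq 0$ is $C^1$, $u'(x_\ast) = 0$, so $u(x) \leq \varepsilon |x - x_\ast|$ near $x_\ast$ for every $\varepsilon > 0$, whence $\int_{S^1} dx/u = \infty$. Here only the upper sandwich $G_t \leq \Phi^{\,u + D(t - t_0)}_{\,t - t_0}$ is needed — its vector field is positive even though $u$ is not — and it gives $0 < \tau(G_t) \leq \frac{t - t_0}{\int_{S^1} dx/(u + D(t - t_0))}$ for $t > t_0$, with $\int_{S^1} dx/(u + D(t - t_0)) \to \infty$ by monotone convergence, so $\tau(G_t) = o(t - t_0)$; the symmetric bound handles $t < t_0$. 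Thus $\rho'(t_0) = 0$, which is precisely $1/(qT)$ read as $0$. Since $\int_{S^1} dx/u < \infty$ exactly when $u$ is nowhere zero, this establishes that $\rho$ is differentiable at $t_0$, that $\rho'(t_0) \neq 0$ forces $\frac{\partial f_t^q}{\partial t}(x, t_0) \neq 0$ for all $x$, and that $\rho'(t_0) = \bigl(q \int_{S^1} \frac{dx}{\partial_t f_t^q(x, t_0)}\bigr)^{-1}$.

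I expect the main obstacle to be the uniform sandwiching in the second step: one must bound $G_t$ between \emph{genuine} time-$(t - t_0)$ flow maps, whose rotation numbers are computed exactly by conjugacy to rigid rotations, so that the non-Lipschitz dependence of the rotation number on the map is never invoked — while keeping straight the two signs of $t - t_0$ and isolating the flow-versus-Euler-step error bounds that feed the constant $D$.
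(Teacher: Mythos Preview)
Your proof is correct and takes a genuinely different route from the paper's. Both begin identically: strict monotonicity forces $G_{t_0} = \mathrm{id}$ (the paper isolates this as a separate lemma), and the $C^2$ hypothesis gives the uniform expansion $G_t(x) = x + (t - t_0)u(x) + O((t-t_0)^2)$ with $u \geq 0$. From there the arguments diverge. The paper works directly with orbits: it introduces the \emph{rotation time} $T_t$ (the least $n$ with $\tilde f_t^{\,n}(0) \geq 1$), partitions $S^1$ into $N$ equal arcs $I_n$, estimates the number of iterates of $0$ falling in $I_n$ as approximately $\dfrac{1}{N(t-t_0)\,u(n/N)}$, and sums to recognize a Riemann sum for $\int_{S^1} dx/u$; the degenerate case is handled by a separate orbit-tracking lemma showing that a zero of $u$ traps the orbit of $x_0$ too long to be compatible with $\rho(t)/(t-t_0)$ being bounded away from $0$. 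You instead sandwich $G_t$ between genuine time-$(t-t_0)$ flow maps of the shifted fields $u \pm D(t-t_0)$, whose translation numbers are \emph{exactly} $(t-t_0)\big/\int_{S^1} dx/(u \pm D(t-t_0))$ via the explicit conjugacy to a rigid rotation, and then use monotonicity of $\tau$ under pointwise ordering to squeeze; the same one-sided sandwich together with monotone convergence dispatches the degenerate case. Your approach makes the formula transparent --- $G_t$ is, to first order in $t - t_0$, the time-$(t-t_0)$ map of the flow of $u$, which rotates at speed $1/T$ --- and unifies the two cases, while the paper's argument is more elementary in that it never leaves the discrete dynamics or invokes flows.
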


As we discuss below, this relates to classic results about the derivative of the rotation number when the value of the rotation number is irrational (see Herman, \cite{Herman2}).  It stands in sharp contrast to how rotation number behaves at rational values in families given by composing by rotation by $t$; see Matsumoto \cite{Matsumoto}.

\begin{remark}

The arguments in this paper actually work in somewhat greater generality than we have assumed in Theorems 1 and 2.  They imply the conclusion of Theorem 2 if we replace strict monotonicity of the rotation number with the following weaker condition:

($\star$)  Whenever $\rho(t_0) \in \Q$, there exists $\epsilon > 0$ such that either $\rho(t) > \rho(t_0)$ for $t \in (t_0, t_0 + \epsilon)$ and $\rho(t) < \rho(t_0)$ for $t \in (t_0 - \epsilon, t_0)$, or $\rho(t) < \rho(t_0)$ for $t \in (t_0, t_0 + \epsilon)$ and $\rho(t) > \rho(t_0)$ for $t \in (t_0 - \epsilon, t_0)$.

Similarly, in Theorem 1 instead of assuming $\rho(t) = t$ we could have assumed $\rho$ is strictly monotone and has nonzero derivative when its value is rational.  If we replace strictly monotone with ($\star$), then we get conjugacy to the map of the torus that preserves the circles $C_t$ of constant $t$, acting on $C_t$ by $R_{\rho(f_t)}$.  These generalizations of Theorem 1 rely on the fact that rotation number is differentiable when its value is rational, even in the generality of ($\star$), by Theorem 2.

\end{remark}

\section{Proof of Theorem 1}
Throughout the proof, $f$ is as in the statement of Theorem 1.  We use $g$ or $g_t$ to refer to more general homeomorphisms or families of homeomorphisms of the circle.

\begin{proof}[Proof of Theorem \ref{ConjToLinear}]

We must show that we can find conjugacies $\phi_t$ from $f_t$ to $R_t$ for all $t$ (i.e. $f_t = \phi_t^{-1} \circ R_t \circ \phi_t$) such that $\phi_t$ varies continuously with $t$.  Consider the case where $t \notin \Q$.  In that case, $f_t$ is $C^2$ and has irrational rotation number $t$, so by Denjoy's theorem $f_t$ is conjugate to $R_t$.  Moreover, since the only homeomorphisms of the circle that commute with a given irrational rotation are themselves rotations, if $\phi_t$ and $\tilde{\phi_t}$ are two conjugacies from $f_t$ to $R_t$ then they will differ by only a rotation.  If we agree once and for all that the conjugacies should send 0 to 0, then we have a well defined $\phi_t$ for each irrational $t$.

Now what we must show is that these $\phi_t$ ($t \notin \Q$) vary continuously in $t$, and that they extend continuously to conjugacies for $t \in \Q$.  We note that the assumption that $\rho(t) = t$, instead of merely being strictly monotone, becomes important in Lemma 6.

The following lemma implies that the $\phi_t$ ($t \notin \Q$) vary continuously in $t$.

\begin{lemma}
Let $g, g' \colon S^1 \to S^1$ be conjugate to irrational rotations, say $g = \phi^{-1} \circ R_\alpha \circ \phi$ and $g' = (\phi')^{-1} \circ R_{\alpha'} \circ \phi'$, where $\phi$ and $\phi'$ are chosen to send $0$ to $0$.  For any $\epsilon > 0$, there is a $\delta > 0$ such that if $|g - g'|_{C^0} < \delta$ then $|\phi - \phi'|_{C^0} < \epsilon$.
\end{lemma}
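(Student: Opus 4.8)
The plan is to establish a quantitative, uniform version of the classical fact that a circle homeomorphism conjugate to an irrational rotation determines its conjugacy (normalized to fix $0$) in a continuous way. The conjugacy $\phi$ to $R_\alpha$ is characterized dynamically: it is the unique orientation-preserving homeomorphism with $\phi(0) = 0$ that carries the orbit $\{g^n(0)\}_{n \in \Z}$ to the orbit $\{n\alpha \bmod 1\}_{n \in \Z}$ in an order-preserving way (and then extends uniquely by density and monotonicity). So the strategy is: (i) show $\alpha$ and $\alpha'$ are close when $g$ and $g'$ are close, using continuity of the rotation number; (ii) show a long finite piece of the orbit of $0$ under $g$ is close to the corresponding piece under $g'$; (iii) conclude that $\phi$ and $\phi'$ agree to within $\epsilon$ on a $\frac{\epsilon}{2}$-dense set, hence everywhere up to $\epsilon$, because both are monotone.

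**In more detail**, first I would fix $\epsilon > 0$ and choose $N$ large enough that the points $0, \alpha, 2\alpha, \dots, N\alpha$ (mod $1$) — equivalently $\phi(0), \phi(g(0)), \dots, \phi(g^N(0))$ — are $\frac{\epsilon}{3}$-dense in $S^1$; such an $N$ exists because $\alpha$ is irrational, but a subtlety is that $N$ must be chosen uniformly for all $\alpha'$ near $\alpha$, which is fine since density of $\{0, \alpha', \dots, N\alpha'\}$ is an open condition in $\alpha'$ once we allow it to be, say, $\frac{\epsilon}{2}$-dense, and we can also just work with $\alpha'$ ranging over a small interval avoiding the finitely many rationals with small denominator. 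Next, using that $\rho$ is continuous and that iteration $g \mapsto g^n$ is continuous in $C^0$, for $g'$ sufficiently $C^0$-close to $g$ we get $|\alpha - \alpha'|$ small and $|g^n(0) - (g')^n(0)|$ small for all $0 \le n \le N$; combined with continuity of $\phi'$, the images $\phi'((g')^n(0)) = n\alpha'$ are within $\frac{\epsilon}{3}$ of $n\alpha = \phi(g^n(0))$. The key identity to exploit is that $\phi$ and $\phi'$ take \emph{known} values on these orbit points: $\phi(g^n(0)) = n\alpha$ and $\phi'((g')^n(0)) = n\alpha'$.

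**The heart of the argument** is then a monotonicity squeeze. Given any $y \in S^1$, pick orbit points $g^i(0)$ and $g^j(0)$ that bracket $y$ closely (possible by $\frac{\epsilon}{3}$-density); since $(g')^i(0)$ and $(g')^j(0)$ are $C^0$-close to $g^i(0)$ and $g^j(0)$ and the orbit under $g'$ is also $\frac{\epsilon}{2}$-dense, one can arrange that $(g')^i(0)$ and $(g')^j(0)$ bracket $y$ as well (possibly after enlarging the bracketing index set slightly, or by choosing $\delta$ so small that the relevant order relations are preserved). Then $\phi(y)$ lies between $i\alpha$ and $j\alpha$, and $\phi'(y)$ lies between $i\alpha'$ and $j\alpha'$; since $i\alpha$ is within $\frac{\epsilon}{3}$ of $i\alpha'$ and likewise for $j$, and the interval $[i\alpha, j\alpha]$ can be taken to have length $< \frac{\epsilon}{3}$, both $\phi(y)$ and $\phi'(y)$ sit in an interval of length $< \epsilon$, giving $|\phi(y) - \phi'(y)| < \epsilon$.

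**The main obstacle** I anticipate is making the bracketing step genuinely uniform: one must ensure that the combinatorial order of the finite orbit segment $\{(g')^n(0) : 0 \le n \le N\}$ matches that of $\{g^n(0) : 0 \le n \le N\}$, and that the gaps do not degenerate, uniformly in $g'$ near $g$. This is where one invokes that the ordering of an orbit of an irrational rotation is the same as the ordering dictated by $n\alpha \bmod 1$, which is ``stable'' under small perturbations of $\alpha$ away from resonances, together with the fact that for $\delta$ small the points $(g')^n(0)$ are within any prescribed tolerance of $g^n(0)$ for $n \le N$. One should be slightly careful that no two of the $g^n(0)$ coincide and that consecutive ones (in circular order) are separated by a definite gap, so that a $C^0$-perturbation of size $\ll$ that gap cannot reverse their order; choosing $N$ first and then $\delta$ depending on $N$ (and on the minimal gap among the first $N$ orbit points, which is positive) handles this. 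Everything else — continuity of $\rho$, continuity of composition in $C^0$, uniform continuity of the fixed map $g$ and of $\phi'$ on the compact $S^1$ — is routine.
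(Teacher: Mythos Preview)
Your proposal is correct and follows essentially the same approach as the paper: choose $N$ so that $0,\alpha,\dots,N\alpha$ are $\epsilon$-dense, use the identities $\phi(g^n(0))=n\alpha$ and $\phi'((g')^n(0))=n\alpha'$ together with continuity of rotation number and of iteration, and finish with a monotonicity squeeze. The paper sidesteps your ``main obstacle'' (matching the combinatorial order of the two finite orbits) by bracketing an arbitrary $x$ using only the $g'$-orbit and invoking uniform continuity of the fixed map $\phi$ to get $|\phi((g')^n(0))-\phi'((g')^n(0))|<2\epsilon$ at those bracketing points, so no order-preservation argument is ever needed.
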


\begin{proof}
Let $\epsilon > 0$.  Choose $N$ large enough so that the points $0, \alpha, \ldots, N\alpha$ are $\epsilon$-dense in $S^1$ in the sense that the intervals of $S^1 \backslash \{0, \dots, N\alpha\}$ all have length less than $\epsilon$.  If $g'$ is sufficiently close to $g$, we will have $|\alpha - \alpha'| < \frac{\epsilon}{N}$, which implies that the intervals of $S^1 \backslash \{0, \dots, N\alpha'\}$ have length less than $2\epsilon$.

We would like to have control on how much $\phi$ and $\phi'$ disagree on the points $(g')^n(0), 0 \leq n \leq N$.  Since $\phi(g^n(0)) = n\alpha$ and $\phi'((g')^n(0)) = n\alpha'$, $$|\phi(g^n(0)) - \phi'((g')^n(0))| < \epsilon.$$ Since $\phi$ is uniformly continuous, if $g'$ is close enough to $g$ then for all $0 \leq n \leq N$, $(g')^n(0)$ will be close enough to $g^n(0)$ that $$|\phi(g^n(0)) - \phi((g')^n(0))| < \epsilon.$$ Therefore, $$|\phi((g')^n(0)) - \phi'((g')^n(0))| < 2\epsilon.$$

Now let $x \in S^1$, and let $0 \leq m, n \leq N$ be such that $((g')^m(0), (g')^n(0))$ is one of the intervals of $S^1 \backslash \{0, \dots, (g')^N(0)\}$ and $x \in [(g')^m(0), (g')^n(0)]$.  Then $m\alpha' \leq \phi'(x) \leq n\alpha'$.  Since $\phi((g')^m(0))$ is within $2\epsilon$ of $\phi'((g')^m(0))$, and similarly for $\phi((g')^n(0))$, $$|\phi(x) - \phi'(x)| < 2\epsilon + (n - m)\alpha'.$$  Recall that $(n - m)\alpha' < 2\epsilon$, so $$|\phi(x) - \phi'(x)| < 4\epsilon.$$  Since $\epsilon$ can be made arbitrarily small, we are done.
\end{proof}

Note that this reasoning also applies if $g'$ is conjugate to a rational rotation.  In that case, it says that for any $\epsilon > 0$, there exists $\delta > 0$ such that if $|g - g'|_{C^0} < \delta$ then $|\phi - \phi'|_{C^0} < \epsilon$, where $\phi'$ is any conjugacy of $g'$ to rotation that sends $0$ to $0$.

The following lemma shows that $f_0 = Id$.

\begin{lemma}
If a homeomorphism $g$ of the circle has $\rho(g) = 0$, and there exist homeomorphisms arbitrarily close to $g$ with positive rotation number, as well as homeomorphisms arbitrarily close with negative rotation number, then $g = Id$.
\end{lemma}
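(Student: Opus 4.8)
The plan is to argue by contraposition: I will show that if $g \neq Id$ while $\rho(g) = 0$, then $g$ cannot be approximated on at least one side, i.e.\ either no homeomorphism sufficiently $C^0$-close to $g$ has positive rotation number, or none has negative rotation number. Throughout I fix the lift $\tilde g$ of $g$ that has a fixed point (one exists precisely because $\rho(g) = 0$), so that $\tau(\tilde g) = 0$; for a homeomorphism $h$ close to $g$ I use the lift $\tilde h$ close to $\tilde g$, and then ``$h$ has positive (resp.\ negative) rotation number'' unambiguously means $\tau(\tilde h) > 0$ (resp.\ $\tau(\tilde h) < 0$). Note that $\|h - g\|_{C^0}$ controls $\|\tilde h - \tilde g\|_{C^0}$ for these choices of lift.

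The key classical input is the Poincar\'e characterization of the sign of the translation number: for any lift $\tilde h$ one has $\tau(\tilde h) \ge 0$ if and only if there is some $x$ with $\tilde h(x) \ge x$, and $\tau(\tilde h) \le 0$ if and only if there is some $x$ with $\tilde h(x) \le x$. (Each follows from the intermediate value theorem together with the fact that if $\tilde h(x) - x$ has constant sign then, being $1$-periodic and continuous, it is bounded away from $0$ in that sign, forcing $\tau$ to share the sign.) In particular $\tau(\tilde h) < 0$ forces $\tilde h(x) < x$ for \emph{every} $x$, and $\tau(\tilde h) > 0$ forces $\tilde h(x) > x$ for every $x$.

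Now suppose $g \neq Id$. Then $\tilde g \neq \mathrm{id}_{\R}$, so there is a point $x_1$ with $\tilde g(x_1) \neq x_1$. Assume first $\tilde g(x_1) > x_1$ and set $d = \tilde g(x_1) - x_1 > 0$. Then every homeomorphism $h$ with $\|\tilde h - \tilde g\|_{C^0} < d$ satisfies $\tilde h(x_1) > x_1$, hence by the previous paragraph $\tau(\tilde h)$ cannot be negative; so no homeomorphism within the corresponding $C^0$-distance of $g$ has negative rotation number, contradicting the hypothesis. If instead $\tilde g(x_1) < x_1$, the same argument with all inequalities reversed shows no sufficiently close homeomorphism has positive rotation number, again contradicting the hypothesis. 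Hence $g = Id$. I expect no genuine obstacle here; the only point demanding care is the bookkeeping of lifts, namely pinning down ``positive/negative rotation number for $h$ near $g$'' via the requirement that $\tilde h$ be the lift near the fixed-point-carrying lift $\tilde g$, together with the (standard, easily proved) sign criterion for $\tau$.
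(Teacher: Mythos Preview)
Your proof is correct and shares the paper's core idea: argue by contraposition, note that if $\tilde g$ displaces some point $x_1$ (say to the right by $d>0$), then any lift $\tilde h$ with $\|\tilde h-\tilde g\|_{C^0}<d$ still sends $x_1$ to the right, and conclude that $\tau(\tilde h)$ cannot be negative. The difference is in how that last step is executed. You invoke the standard Poincar\'e sign criterion for the translation number (``$\tau(\tilde h)<0$ forces $\tilde h(x)<x$ for all $x$'') as a black box, which finishes the argument in one line. The paper instead fixes a lift $\tilde p$ of a fixed point of $g$, splits into three cases according to whether $\tilde h$ moves $\tilde p$ left, right, or not at all, and in the ``right'' case carries out an inductive estimate to show $\tau(\tilde h)\le \tfrac12$ as well as $\ge 0$. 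That upper bound is not actually needed once the nearby lift is pinned down (as you do explicitly), so your route is shorter; the paper's version is more self-contained in that it does not quote the sign criterion but effectively reproves the relevant half of it by hand.
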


\begin{proof}
By assumption, $\rho(g) = 0$, so $g$ has fixed points.  The intuition of the lemma is as follows.  If $g \neq Id$, then either its graph crosses the diagonal $y = x$, or does not cross but is tangent to the diagonal at some points.

If the graph crosses the diagonal, then under slight perturbations it will still have fixed points, hence still have rotation number $0$.  If it is tangent to the diagonal but does not cross, then under small perturbations we may increase or decrease (but not both) the rotation number.  The reason is, in this situation the graph either does not go below, or does not go above, the diagonal.  Suppose it does not go below the diagonal.  Then by pushing the graph upward slightly we make the rotation number positive, but if we push it downward the rotation number will still be 0.  Similarly, if the graph does not go above the diagonal, then by applying a small perturbation we may decrease, but not increase, the rotation number.

Both of these possibilities contradict our situation: There exist homeomorphisms arbitrarily close to $g$ with positive rotation number, as well as homeomorphisms arbitrarily close with negative rotation number.  We wish to make this idea precise.

Suppose $g \neq Id$.  Let $p$ be a fixed point of $g$, and let $\tilde{p}$ be one of its lifts to $\R$.  Let $\tilde{g}$ be the lift of $g$ that fixes the lifts of fixed points of $g$.  Since $\tilde{g} \neq Id$, assume without loss of generality that some point is moved to the right under $\tilde{g}$, say by a distance $d$.  We will show that for $h$ sufficiently close to $g$, $\rho(h) \in [0, \frac{1}{2}]$.  If we had assumed that some point is moved to the left under $\tilde{g}$, our reasoning would show that for $h$ close to $g$, $\rho(h) \in [-\frac{1}{2}, 0]$.

Let $\delta$ be small enough so that $\tilde{g}(\tilde{p} + \delta) < \tilde{p} + \frac{1}{4}$.  Let $\epsilon = \min\{\delta, d, \frac{1}{4}\}$.  We claim that if $h$ is within $\epsilon$ of $g$, then $\rho(h) \in [0, \frac{1}{2}]$.

Let $\tilde{h}$ be the lift of $h$ which is within $\epsilon$ of $\tilde{g}$.  There are three possibilities: $\tilde{h}$ moves $\tilde{p}$ to the left, $\tilde{h}$ fixes $\tilde{p}$, or $\tilde{h}$ moves $\tilde{p}$ to the right.  In the second case, since $\tilde{h}$ has a fixed point, $\tau(\tilde{h}) = 0$, so $\rho(h) = 0$.

In the first case, note that there was a point $\tilde{x}$ moved to the right by $d$ under $\tilde{g}$.  Since $\epsilon \leq d$ and $\tilde{h}$ differs from $\tilde{g}$ by less than $\epsilon$, this point still gets moved to the right.  Since one point gets moved to the left and another gets moved to the right, some point between them gets fixed, so again $\rho(h) = 0$.

In the third case, we have $\tilde{h}(\tilde{p}) < \tilde{p} + \epsilon \leq \tilde{p} + \delta$, so $\tilde{g}(\tilde{h}(\tilde{p})) < \tilde{p} + \frac{1}{4}$, and so \begin{equation} \tilde{h}(\tilde{h}(\tilde{p})) < \tilde{p} + \frac{1}{4} + \epsilon \leq \tilde{p} + \frac{1}{2}. \end{equation} Since $$\tilde{h}^2(\tilde{p} + 1) = \tilde{h}^2(\tilde{p}) + 1 < \tilde{p} + \frac{3}{2}$$ by (1), and $\tilde{h}^2(\tilde{p}) < \tilde{p} + 1$, we have $\tilde{h}^4(\tilde{p}) < \tilde{p} + \frac{3}{2}$.  Continuing by induction, for any integer $n > 0$, $\tilde{h}^{2n}(\tilde{p}) < n - \frac{1}{2}$.  It follows that $\tau(\tilde{h}) \leq \frac{1}{2}$.  Certainly $\tau(\tilde{h}) \geq 0$, since $\tilde{h}$ sends a point (namely $\tilde{p}$) to the right.  So $\tau(\tilde{h}) \in [0, \frac{1}{2}]$ and hence $\rho(h) \in [0, \frac{1}{2}]$.

Since there are homeomorphisms arbitrarily close to $g$ with a small positive rotation number and homeomorphisms arbitrarily close with a small negative rotation number, $g = Id$.
\end{proof}

The following lemma implies that $\frac{\partial f_t}{\partial t}(x, t_0)$ is greater than $0$ for all $x$, since $\rho'(0) = 1$.

\begin{lemma}
Suppose $g \colon S^1 \times [a, b] \to S^1 \times [a, b]$ is $C^2$, with $g(x, t) = (g_t(x), t)$, and the rotation number is a strictly monotone increasing function of $t$, with $\rho(t_0) = 0$.  Suppose it is not the case that the derivative $\rho'(t_0)$ exists and is equal to $0$.  Then $\frac{\partial g_t}{\partial t}(x, t_0)$ is greater than $0$ for all $x$.
\end{lemma}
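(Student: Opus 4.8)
The plan is to argue by contradiction. Suppose $\frac{\partial g_t}{\partial t}(x_0, t_0) \le 0$ for some $x_0 \in S^1$, and write $\psi(x) := \frac{\partial g_t}{\partial t}(x, t_0)$; since $g$ is $C^2$, $\psi$ is a $C^1$ function on $S^1$. I would first reduce to the case $g_{t_0} = Id$. Indeed $\rho(t_0) = 0$, and by strict monotonicity $\rho(t) > 0$ for $t$ just above $t_0$ and $\rho(t) < 0$ for $t$ just below, while the family $t \mapsto g_t$ is $C^0$-continuous; so the previous lemma applies to $g_{t_0}$ and gives $g_{t_0} = Id$. Choosing the lift with $\tilde g_{t_0}(x) = x$ and letting $\tilde g_t$ depend continuously on $t$, a second-order Taylor expansion in $t$ (the remainder uniform in $x$ because $\partial_t^2$ of the lift is continuous and $1$-periodic in $x$, hence bounded) gives
\[
\tilde g_t(x) - x = (t - t_0)\,\psi(x) + R(x,t), \qquad |R(x,t)| \le C\,(t-t_0)^2 ,
\]
for $t$ near $t_0$, with $C$ independent of $x$.

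Next I would rule out $\psi(x_0) < 0$ by showing $\psi \ge 0$ everywhere: for $t > t_0$ close to $t_0$, $\rho(t) \in (0,1)$, so $g_t$ has no fixed point and its lift (close to the identity) satisfies $\tilde g_t(x) > x$ for all $x$; hence $(t-t_0)\psi(x) + R(x,t) > 0$, and dividing by $t-t_0>0$ and letting $t \to t_0^+$ yields $\psi(x) \ge 0$. So the remaining case is $\psi(x_0) = 0$. Since $\psi \ge 0$ is $C^1$ and attains a minimum at $x_0$, also $\psi'(x_0) = 0$, so for every $\epsilon > 0$ there is $\delta > 0$ with $0 \le \psi(x) \le \epsilon\,|x - x_0|$ whenever $|x - x_0| \le \delta$.

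The main point — and the step I expect to be the real obstacle — is to turn this ``slow point'' of the motion into a bound forcing $\rho'(t_0) = 0$. Fix small $\epsilon > 0$ and the corresponding $\delta$. For $t = t_0 + h$ with $h>0$ small, every orbit of $\tilde g_t$ drifts steadily to the right, and on $[x_0, x_0+\delta]$ each step has length $\tilde g_t(x) - x \le \epsilon h\,(x - x_0) + C h^2$. Iterating the affine estimate $y_{n+1} \le (1 + \epsilon h)\,y_n + C h^2$ (equivalently, comparing the orbit with the flow of the vector field $x \mapsto \epsilon h(x-x_0) + Ch^2$) shows that the number of iterates needed to traverse $[x_0, x_0+\delta]$ is at least $\frac{1}{\epsilon h}\bigl(\log(1/h) - O_\epsilon(1)\bigr)$; hence the number $N(h)$ of iterates for the orbit of $x_0$ to wind once around $S^1$ satisfies $h\,N(h) \to \infty$ as $h \to 0^+$. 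On the other hand $\tilde g_t^{\,N(h)}$ moves every point to the right by at most $2 + \sup_x(\tilde g_t(x)-x) \le 3$, so $\rho(t) = \tau(\tilde g_t) \le 3/N(h)$, and therefore $0 < \rho(t_0+h)/h \le 3/(h\,N(h)) \to 0$. Thus the right-hand derivative of $\rho$ at $t_0$ is $0$; the symmetric argument with $t < t_0$ (orbits now drifting left through $[x_0 - \delta, x_0]$) shows the left-hand derivative is $0$. Hence $\rho'(t_0)$ exists and equals $0$, contrary to hypothesis, so $\frac{\partial g_t}{\partial t}(x, t_0) > 0$ for every $x$.
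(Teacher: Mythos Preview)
Your proof is correct and follows essentially the same strategy as the paper's: reduce to $g_{t_0}=\mathrm{Id}$ via the previous lemma, observe $\psi\ge 0$, and then show that a zero of $\psi$ (necessarily a $C^1$ minimum, so $\psi(x)\le\epsilon|x-x_0|$ nearby) forces the orbit of $x_0$ to pass through its neighborhood too slowly for the rotation number to grow at a positive linear rate in $t-t_0$. The only differences are organizational---you run the contrapositive with a uniform second-order Taylor remainder $Ch^2$ and extract an explicit $\tfrac{1}{\epsilon h}\log(1/h)$ lower bound on the winding time, whereas the paper argues by direct contradiction, fixing an assumed positive slope $\eta$ for $\rho$, choosing $\epsilon$ in terms of $\eta$, and counting iterates directly.
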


\begin{proof}
We must show that the function $\frac{\partial g_t}{\partial t}|_{t = t_0}(x)$ is strictly greater than $0$.  By Lemma 5, $g_{t_0} = Id$.  For any $t > t_0$, $g_t(x) > x$ for all $x$, and similarly for $t < t_0$; this must be true because $\rho$ is strictly monotone increasing.  Therefore, for any $x$, $\frac{\partial g_t}{\partial t}|_{t = t_0}(x) \geq 0$.  It remains to show that it never vanishes.

Suppose that for some $x_0$, $\frac{\partial g_t}{\partial t}|_{t = t_0}(x_0) = 0$.  Since the function $\frac{\partial g_t}{\partial t}|_{t = t_0}$ is nonnegative and $C^1$ (because $g$ is $C^2$), it will have a minimum at $x_0$.  Therefore, for any $\epsilon > 0$ there exists $\delta > 0$ such that when $|x - x_0| < \delta$, $\frac{\partial g_t}{\partial t}|_{t = t_0}(x) < \epsilon |x - x_0|$.

Since it is not the case that $\rho'(t_0)$ exists and is equal to $0$, and $\rho$ is monotone increasing, there exists $\eta > 0$ and $t$ arbitrarily close to $t_0$ such that $\frac{\rho(t) - \rho(t_0)}{t - t_0} > \eta$.  Without loss of generality there are $t > t_0$ arbitrarily close to $t_0$ with this property.  We may rewrite it as $\rho(t) > \eta (t - t_0)$.  Let $Q = \frac{2}{\eta} + 3(t - t_0)$, and choose a $\delta$ that works for $\epsilon = \frac{1}{Q}$. For $t$ sufficiently close to $t_0$, we have that for all $x$, $$g_t(x) - x \in ((\frac{\partial g_t}{\partial t}|_{t = t_0}(x) - \frac{\delta}{Q})(t - t_0), (\frac{\partial g_t}{\partial t}|_{t = t_0}(x) + \frac{\delta}{Q})(t - t_0)).$$  Fix a $t > t_0$ which is this close to $t_0$, and which has the property that $\rho(t) > \eta (t - t_0)$.

Consider what iterates of $g_t$ do to $x_0$.  We have $$g_t(x_0) - x_0 < \frac{\delta}{Q}(t - t_0).$$  Since this is less than $\delta$, $$g_t^2(x_0) - x_0 < (\frac{\delta}{Q} + \frac{2\delta}{Q})(t - t_0) = \frac{3\delta}{Q}(t - t_0).$$  This pattern will continue; in general, we will have $$g_t^n(x_0) - x_0 < \frac{2n - 1}{Q}\delta (t - t_0),$$ as long as $\frac{2n - 1}{Q}\delta t < \delta$.  This says that when $\frac{2n - 1}{Q}\delta (t - t_0) < \delta$, i.e. when $n < \frac{Q}{2(t - t_0)} + \frac{1}{2}$, or (substituting in the value $Q = \frac{2}{\eta} + 3(t - t_0)$) when $n < \frac{1}{\eta (t - t_0)} + 2$, \begin{equation} g_t^n(x_0) - x_0 < \delta. \end{equation}

Now notice that since $\rho(t) > \eta (t - t_0)$, there is an integer $n \leq \frac{1}{\eta (t - t_0)} + 1$ such that, after applying $g_t$ $n$ times starting at $x_0$, we will have made a full circle.  But equation (2) says that after applying $g_t$ this many times we will not have moved farther than $\delta$; this is a contradiction.
\end{proof}

\begin{lemma}
As $t \to 0$ ($t \notin \Q$), the conjugacies $\phi_t$ approach the function $x \mapsto \displaystyle\int_0^x \frac{1}{\frac{\partial f_t}{\partial t}(s, 0)} ds$.  Therefore, the conjugacies extend continuously to $t = 0$.
\end{lemma}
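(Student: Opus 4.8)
The plan is to identify the limiting map via the conjugacy equation together with an Euler-type approximation of $f_t$ by a flow. Write $X(x) := \frac{\partial f_t}{\partial t}(x, 0)$; by Lemma 6 (which applies because $\rho(t) = t$ gives $\rho'(0) = 1 \neq 0$), $X$ is strictly positive on all of $S^1$, and it is $C^1$ since $f$ is $C^2$. Let $Y_s$ denote the flow on $S^1$ of the vector field $X$, and $\tilde{Y}_s$ its lift to $\R$ fixing $0$. Set $\psi(x) := \int_0^x \frac{dx'}{X(x')}$, regarded as a strictly increasing function $\R \to \R$; it satisfies $\psi(x+1) = \psi(x) + c$ with $c := \int_0^1 \frac{dx'}{X(x')}$, and, since $\frac{d}{ds}\psi(\tilde{Y}_s(x)) = \frac{1}{X(Y_s(x))} X(Y_s(x)) = 1$, we have $\psi(\tilde{Y}_s(x)) = \psi(x) + s$, and in particular $\psi^{-1}(s) = \tilde{Y}_s(0)$. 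A priori $\psi$ descends only to a map $S^1 \to \R/c\Z$; that $c = 1$, so that $\psi$ is a genuine orientation-preserving circle homeomorphism fixing $0$ (and hence is the claimed limit function), will come out of the argument below. It suffices to treat the limit $t \to 0^+$ through irrationals; the case $t \to 0^-$ is entirely analogous.

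Next I use the conjugacy equation. For $t \notin \Q$ we have $\phi_t \circ f_t = R_t \circ \phi_t$ with $\phi_t(0) = 0$. Passing to the lift $\tilde{f_t}$ of $f_t$ near the identity and the lift $\tilde{\phi_t}$ of $\phi_t$ fixing $0$, this lifts to $\tilde{\phi_t} \circ \tilde{f_t} = \tilde{\phi_t} + t$ --- the integer ambiguity is $0$ because, evaluated at $0$, both sides lie in $(0,1)$ for small $t > 0$ --- and hence $\tilde{\phi_t}(\tilde{f_t}^{\,n}(0)) = nt$, i.e. $\tilde{f_t}^{\,n}(0) = \tilde{\phi_t}^{-1}(nt)$ for every $n \geq 0$. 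On the other hand, Taylor expansion in $t$ (using $f \in C^2$) gives $\tilde{f_t}(y) = y + t X(y) + O(t^2)$ uniformly in $y$, so the sequence $y_n := \tilde{f_t}^{\,n}(0)$ is a perturbed Euler iteration for $\dot y = X(y)$ with step $t$ and per-step error $O(t^2)$. By the standard convergence of Euler's method over a bounded time interval ($X$ being Lipschitz on the compact $S^1$), for each fixed $s \geq 0$ we get $\tilde{f_t}^{\,\lfloor s/t \rfloor}(0) \to \tilde{Y}_s(0)$ as $t \to 0$.

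Now combine the two. For fixed $s \geq 0$ and small $t > 0$ one has $\lfloor s/t \rfloor\, t \leq s < (\lfloor s/t \rfloor + 1)\, t$, so monotonicity of $\tilde{\phi_t}^{-1}$ squeezes
\[ \tilde{f_t}^{\,\lfloor s/t \rfloor}(0) \;\leq\; \tilde{\phi_t}^{-1}(s) \;\leq\; \tilde{f_t}^{\,\lfloor s/t \rfloor + 1}(0). \]
The two ends differ by a single Euler step of size $O(t)$, so both converge to $\tilde{Y}_s(0)$, whence $\tilde{\phi_t}^{-1}(s) \to \tilde{Y}_s(0) = \psi^{-1}(s)$. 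Taking $s = 1$ gives $1 = \tilde{\phi_t}^{-1}(1) \to \tilde{Y}_1(0)$: the flow of $X$ carries $0$ to $1$ in time $1$. Since that time equals $\int_0^1 \frac{dx'}{X(x')} = c$ and $X > 0$, we conclude $c = 1$, so $\psi$ is an orientation-preserving circle homeomorphism fixing $0$ and $\tilde{\phi_t}^{-1} \to \psi^{-1}$ pointwise as $t \to 0$ through irrationals. As the $\tilde{\phi_t}^{-1}$ are monotone increasing and the limit $\psi^{-1}$ is continuous, this pointwise convergence is automatically uniform (cover $S^1$ by finitely many arcs on each of which $\psi^{-1}$ oscillates by less than $\epsilon$; once $t$ is small enough that $\tilde{\phi_t}^{-1}$ is within $\epsilon$ of $\psi^{-1}$ at the finitely many arc-endpoints, monotonicity forces $|\tilde{\phi_t}^{-1} - \psi^{-1}| < 2\epsilon$ everywhere). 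Inverting, $\phi_t \to \psi$ uniformly, and setting $\phi_0 := \psi$ extends the family of conjugacies continuously to $t = 0$, as desired.

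I expect the main obstacle to be the flow approximation in the second paragraph: one must show, with uniformity in the base point, that $f_t$ is an $O(t^2)$-perturbation of the time-$t$ flow of $X$, and that the resulting perturbed Euler iteration converges to the genuine flow over a whole bounded time interval (rather than merely for infinitesimal times) --- this is where the $C^2$ hypothesis really enters. Everything else --- the monotonicity squeeze, the identification $c = 1$, and the upgrade from pointwise to uniform convergence --- is routine once that estimate is in hand. Alternatively, one could run the argument through invariant measures, using that for irrational $t$ the conjugacy is $\phi_t(x) = \mu_t([0,x])$ for the unique $f_t$-invariant probability measure $\mu_t$, and that $\mu_t$ ought to converge to the flow-invariant measure $\frac{dx}{X(x)}$; but this meets the same essential difficulty.
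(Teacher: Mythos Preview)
Your proof is correct and rests on the same core observation as the paper's: for small $t$, iterating $f_t$ from $0$ approximates evolution under the vector field $X = \frac{\partial f_t}{\partial t}|_{t=0}$, and the conjugacy $\phi_t$ records this in the coordinate that linearizes that flow. The difference is one of packaging. The paper works directly with $\phi_t$: it proves the estimate $\int_x^{f_t(x)} \frac{ds}{X(s)} \in ((1-\epsilon)t,(1+\epsilon)t)$ --- i.e.\ in the coordinate $\psi$, one application of $f_t$ is nearly translation by $t$ --- and telescopes over the orbit of $0$ to trap $\psi(x)$ between two multiples of $t$ that also sandwich $\phi_t(x)$. You instead pass to $\phi_t^{-1}$, read $\tilde f_t^{\,n}(0)=\tilde\phi_t^{-1}(nt)$ as a perturbed Euler scheme for the flow of $X$, and invoke convergence of Euler's method over a bounded time window as a black box. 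Your route is slightly more conceptual and has the virtue of making explicit the identity $\int_0^1\frac{ds}{X(s)}=1$ (so the limit $\psi$ really is a degree-one circle map), a point the paper passes over; the paper's route is more elementary and entirely self-contained, needing no appeal to ODE theory.
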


\begin{proof}
We must show that for every $x$, $\phi_t(x) \to \displaystyle\int_0^x \frac{1}{\frac{\partial f_t}{\partial t}(s, 0)} ds$ as $t \to 0$.

Choose $t$ small enough so that for any $x \in S^1$, $$\int_x^{f_t(x)} \frac{1}{\frac{\partial f_t}{\partial t}(s, 0)} ds \in ((1 - \epsilon)t, (1 + \epsilon)t).$$  Let $n$ be such that $nt \leq \phi_t(x) \leq (n + 1)t$.  Note that $$\int_0^x \frac{1}{\frac{\partial f_t}{\partial t}(s, 0)} ds \approx \int_0^{f_t^n(0)} \frac{1}{\frac{\partial f_t}{\partial t}(s, 0)} ds,$$ and the latter integral can be broken into integrals of the form $\displaystyle\int_{f_t^i(0)}^{f_t^{i + 1}(0)} \frac{1}{\frac{\partial f_t}{\partial t}(s, 0)} ds$, each of which lies in $((1 - \epsilon)t, (1 + \epsilon)t)$.  In this way, we find that $$\frac{1}{(1 + \epsilon)t} \int_0^x \frac{1}{\frac{\partial f_t}{\partial t}(s, 0)} ds - 1 < n < \frac{1}{(1 - \epsilon)t} \int_0^x \frac{1}{\frac{\partial f_t}{\partial t}(s, 0)} ds.$$

Now note that $\phi_t(x)$ lies between $nt$ and $(n + 1)t$, so it differs from $nt$ by at most $t$.  And $nt$ satisfies $$\frac{1}{1 + \epsilon} \int_0^x \frac{1}{\frac{\partial f_t}{\partial t}(s, 0)} ds - t < nt < \frac{1}{1 - \epsilon} \int_0^x \frac{1}{\frac{\partial f_t}{\partial t}(s, 0)} ds.$$ When $t$ and $\epsilon$ go to $0$, the difference between $nt$ and $\displaystyle\int_0^x \frac{1}{\frac{\partial f_t}{\partial t}(s, 0)} ds$ and hence also the difference between $\phi_t(x)$ and $\displaystyle\int_0^x \frac{1}{\frac{\partial f_t}{\partial t}(s, 0)} ds$ goes to $0$.
\end{proof}

To finish the proof of Theorem 1, observe that there's nothing special about $t = 0$ as opposed to any other rational number. For any $t = \frac{p}{q}$ (a reduced fraction), we may consider the function $f^q$.  By Lemma 5, this is the identity on the circle $\{t = \frac{p}{q} \}$, and nearby it will look just like $f$ did near $t = 0$. In particular, we can look at $\frac{\partial f_t^q}{\partial t}|_{t = \frac{p}{q}}$; this will be strictly greater than $0$, and we will get a function which the conjugacies must approach as $t \to \frac{p}{q}$, $t \notin \Q$.  (Note that for $t \notin \Q$, the conjugacy $\phi_t$ from $f_t$ to $R_t$ is also the conjugacy from $f_t^q$ to $R_{qt}$.)

Therefore, we have a family of conjugacies $\phi_t$ defined for every $t$ which is continuous in $t$; hence, $f$ is conjugate to the map $\left( \begin{smallmatrix} 1&1\\ 0&1 \end{smallmatrix} \right)$ of the torus.
\end{proof}

We note that the above result does not hold if we merely require the rotation number to be strictly monotone in $t$.

\begin{example}
Define a family $f_t$ parametrized by $t \in S^1$ by

\begin{displaymath}
   f_t = \left\{
     \begin{array}{lr}
       id, t = 0\\
       \phi_t^{-1} \circ R_{\rho(t)} \circ \phi_t, t \neq 0
     \end{array}
   \right.
\end{displaymath}

where $\rho \colon S^1 \to S^1$ is 1-1, sends $0$ to $0$, and is ``exponentially flat'' about $0$, i.e. there exists $\epsilon > 0$ such that for all $|t| < \epsilon$, $|\rho(t)| < e^{-\frac{1}{|t|}}$; and $\phi_t$ is defined to be $$\phi_t(x) = x + \frac{1}{4\pi}sin(\frac{1}{t})sin(2\pi x)$$ for $t \neq 0$.

Due to the exponential flatness of $\rho$, the family $f_t$ is $C^\infty$.  The given conjugacies do not vary continuously as $t \to 0$, so it looks as if $f$ may not be topologically conjugate to the map $(x, t) \mapsto (R_{\rho(t)}(x), t)$, which itself is conjugate to the map $\left( \begin{smallmatrix} 1&1\\ 0&1 \end{smallmatrix} \right)$ of the torus.  Let us see why this is true: $f$ is not conjugate to $\left( \begin{smallmatrix} 1&1\\ 0&1 \end{smallmatrix} \right)$.

Let us denote by $C_t$ the circle $\{(x, t) \colon x \in S^1 \}$; we will call these ``horizontal circles.''  Suppose, by way of contradiction, that $\psi$ is a conjugacy such that $f = \psi^{-1} \circ \left( \begin{smallmatrix} 1&1\\ 0&1 \end{smallmatrix} \right) \circ \psi$.  Then there are two possibilities: $\psi$ sends $C_t$ to $C_{\rho(t)}$ or $C_{-\rho(t)}$.  The reason is the following.  The torus is foliated by horizontal circles on which $f$ acts.  The circles on which the rotation number is irrational are dense in this foliation, so where they are sent determines the conjugacy.  Let $t$ be such that $\rho(t) \notin \Q$, and consider where $(0, t)$ is sent by $\psi$.  If $\psi(0, t) \in C_u$, then the whole orbit of $(0, t)$ must be sent into $C_u$, since $\left( \begin{smallmatrix} 1&1\\ 0&1 \end{smallmatrix} \right)$ preserves horizontal circles.  Since the orbit of $(0, t)$ is dense in $C_t$, the image of $C_t$ is $C_u$.  The dynamics of $f$ on $C_t$ must be conjugate to the dynamics of $\left( \begin{smallmatrix} 1&1\\ 0&1 \end{smallmatrix} \right)$ on $C_u$.  This implies that $u = \rho(t)$ or $-\rho(t)$.

Suppose $\psi$ sends every $C_t$ to $C_{\rho(t)}$.  When $\rho(t) \notin \Q$, we know how $\psi$ maps $C_t$ to $C_{\rho(t)}$: up to a rotation, $\psi_t$ agrees with $\phi_t(x)$ (since rotations are the only maps that commute with an irrational rotation).  Define a function $k(t) = \psi_t(0)$; note that, since $\psi$ is assumed to be continuous, $k$ is continuous.  When $\rho(t) \notin \Q$, we have $\psi_t(x) = k(t) + \phi_t(x)$.  In fact, even if $\rho(t_0) \in \Q \hspace{6pt} (\rho(t_0) \neq 0)$, we can choose $t \notin \Q \to t_0$.  As $t \to t_0$, $k(t) \to k(t_0)$ and $\phi_t \to \phi_{t_0}$, so $\psi_t \to k(t_0) + \phi_{t_0} = \psi_{t_0}$.  So for all $x$ and $t \neq 0$, $$\psi_t(x) = k(t) + \phi_t(x).$$

But notice that $$\psi_t(\frac{1}{4}) = k(t) + \frac{1}{4} + \frac{1}{4\pi}sin(\frac{1}{t}).$$  Since $k(t)$ approaches a limit as $t \to 0$, and $\frac{1}{4\pi}sin(\frac{1}{t})$ does not, $\psi_t(\frac{1}{4})$ does not approach a limit as $t \to 0$.  Therefore, $\psi$ is not continuous, a contradiction.

Now suppose that $\psi$ sends $C_t$ to $C_{-\rho(t)}$.  Arguing as above, for some continuous function $k(t)$ we will have $\psi_t(x) = k(t) - \phi_t(x)$ for all $x$ and $t \neq 0$.  But as above, if we plug in $x = \frac{1}{4}$ and let $t \to 0$, this does not approach a limit, contradicting the assumption that $\psi$ is continuous.

Therefore, we have a $C^{\infty}$ family of circle diffeomorphisms parametrized by $t \in S^1$ with rotation number a strictly monotone increasing injective function $S^1 \to S^1$, conjugate at each $t$ to a rotation by $\rho(t)$ but not globally conjugate to $\left( \begin{smallmatrix} 1&1\\ 0&1 \end{smallmatrix} \right)$.
\end{example}

\section{Proof of Theorem 2}
Before we prove Theorem 2, we remark that the subject of the derivative of the rotation number in 1-parameter families has a long and rich history.  In \cite{Herman2}, Herman essentially showed that the function $\rho \colon \Diff_+^1(S^1) \to \R/\Z$ is (Fr\'{e}chet) differentiable at irrational rotations.  The derivative is the map

$$\phi \in C^1(T^1) \mapsto \int_{S^1} \phi(x) dx \in \R$$

He used this to derive a result originally due to Brunovsk\'{y} \cite{Brunov}, that if $f_t$ is a family of homeomorphisms of the circle varying in a $C^1$ way, and $f_0$ is an irrational rotation, then $\rho(t)$ is differentiable at $t = 0$, with derivative $\displaystyle\int_{S^1} \frac{\partial f_t}{\partial t} (x, 0) dx$.  As a corollary, he shows that if $f_0$ is conjugate to an irrational rotation, say $f_0 = \phi^{-1} \circ R_\alpha \circ \phi$, then $\rho(t)$ is still differentiable at $t = 0$, with derivative $$\rho'(0) = \int_{S^1} \phi'(f_0(\phi^{-1}(x))) \cdot \frac{\partial f_t}{\partial t}(\phi^{-1}(x), 0) dx$$ (his notation was slightly different).

Recently, Matsumoto \cite{Matsumoto} showed the following: Let $f$ be a real-analytic diffeomorphism of the circle and let $f_t = R_t \circ f$.  Suppose $\rho(t) = \frac{p}{q}$, and $t$ is an endpoint of a nontrivial interval on which $\rho$ is constantly equal to $\frac{p}{q}$ (recall that for a generic $f \in \Diff_+^{\omega}(S^1)$, $\rho^{-1}(\frac{p}{q})$ will be a nontrivial interval for all $\frac{p}{q} \in \Q$).  Then $$\limsup_{t' \to t} \frac{\rho(t') - \rho(t)}{t' - t} = \infty.$$

Our context rules out this kind of behavior:


\bigskip
\noindent
{\bf Theorem~\ref{Differentiable}.}
{\em   Suppose that $f \colon S^1 \times [a, b] \to S^1 \times [a, b]$ is $C^2$, with $f(x, t) = (f_t(x), t)$, and the rotation number is a strictly monotone increasing function of $t$.  Let $t_0$ be such that $\rho(t_0) = \frac{p}{q} \in \Q$ (where this fraction is reduced).  Then $\rho$ is differentiable at $t_0$.  Moreover, if $\rho'(t_0) \neq 0$ then $\frac{\partial f_t^q}{\partial t}(x, t_0) \neq 0$ for all $x \in S^1$.  In that case,

$$\rho'(t_0) = \frac{1}{q \cdot \int_{S^1} \frac{1}{\frac{\partial f_t^q}{\partial t}(x, t_0)} dx}.$$
}
 
\bigskip

\begin{proof}
We begin by supposing that $\rho(t_0) = 0$; the general case of $\rho(t_0) = \frac{p}{q}$ is not much harder.  If $\rho'(t_0) = 0$, then we are done.  Otherwise, by Lemma 6, $\frac{\partial f_t}{\partial t}(x, t_0)$ is greater than $0$ for all $x$.  We want to show that $$\lim_{t \to t_0} \frac{\rho(t)}{t - t_0} = \frac{1}{\int_{S^1} \frac{1}{\frac{\partial f_t}{\partial t}(x, t_0)} dx}.$$  We demonstrate this limit for $t$ approaching $t_0$ from above, the case of $t$ approaching $t_0$ from below being similar.

We want to define a ``rotation time'' $T_t$, meaning basically how many times we have to apply $f_t$ before we make a full circle.  Let $\tilde{f_t}$ be the lift of $f_t$ with translation number $\tau(\tilde{f_t}) \in (0, 1)$.  Define $T_t$ to be the smallest integer such that $\tilde{f_t}^{T_t}(0) \geq 1$.

To show that $$\lim_{t \downarrow t_0} \frac{\rho(t)}{t - t_0} = \frac{1}{\int_{S^1} \frac{1}{\frac{\partial f_t}{\partial t}(x, t_0)} dx},$$ it is equivalent to show that $$\lim_{t \downarrow t_0} \frac{t - t_0}{\rho(t)} = \int_{S^1} \frac{1}{\frac{\partial f_t}{\partial t}(x, t_0)} dx.$$  Notice that $T_t \in [\frac{1}{\rho(t)}, \frac{1}{\rho(t)} + 1]$, so $$\lim_{t \downarrow t_0} \frac{t - t_0}{\rho(t)} = \lim_{t \downarrow t_0} (t - t_0)T_t$$ (provided either of these limits exists), and it suffices to show that \begin{equation} \lim_{t \downarrow t_0} (t - t_0)T_t = \int_{S^1} \frac{1}{\frac{\partial f_t}{\partial t}(x, t_0)} dx. \end{equation}

We will use the notation $\frac{\partial f_t}{\partial t}|_{t = t_0}(x)$ for $\frac{\partial f_t}{\partial t}(x, t_0)$ to emphasize that $x$ is being varied.  Let us consider Riemann sums for the integral $\displaystyle\int_{S^1} \frac{1}{\frac{\partial f_t}{\partial t}|_{t = t_0}(x)} dx$.  Specifically, let $$S_N = \displaystyle\sum_{n = 0}^{N - 1} \frac{1}{\frac{\partial f_t}{\partial t}|_{t = t_0}(\frac{n}{N})} \cdot \frac{1}{N}.$$  Since $\displaystyle\frac{\partial f_t}{\partial t}|_{t = t_0}$ is strictly greater than $0$ and continuous, $\displaystyle\frac{1}{\frac{\partial f_t}{\partial t}|_{t = t_0}}$ is continuous, so $S_N \to \displaystyle\int_{S^1} \frac{1}{\frac{\partial f_t}{\partial t}|_{t = t_0}(x)} dx$ as $N \to \infty$.  It suffices to show that for sufficiently large $N$ and $t$ close enough to $t_0$, $(t - t_0)T_t$ is within $\epsilon$ of $S_N$, since (if $N$ is large enough) $S_N$ will be within $\epsilon$ of $\displaystyle\int_{S^1} \frac{1}{\frac{\partial f_t}{\partial t}|_{t = t_0}(x)} dx$.

For any $\eta > 0$, for sufficiently large $N$, if we divide the circle into $N$ equal intervals $I_0 = [0, \frac{1}{N}), \ldots, I_{N - 1} = [\frac{N - 1}{N}, 1)$, then for any $x \in I_n$, $$\frac{\partial f_t}{\partial t}|_{t = t_0}(x) \in (\frac{\partial f_t}{\partial t}|_{t = t_0}(\frac{n}{N}) - \eta, \frac{\partial f_t}{\partial t}|_{t = t_0}(\frac{n}{N}) + \eta).$$  Furthermore, for any $\theta > 0$ we can choose $t$ close enough to $t_0$ so that for all $x$, $$f_t(x) - x \in ((\frac{\partial f_t}{\partial t}|_{t = t_0}(x) - \theta)(t - t_0), (\frac{\partial f_t}{\partial t}|_{t = t_0}(x) + \theta)(t - t_0)).$$

Let us consider the orbit of $0$ under $f_t$.  We would like estimates on the number of iterates contained in each $I_n$; let us call this $(T_t)_n$.  We have $$(T_t)_n \leq \frac{1}{(\frac{\partial f_t}{\partial t}|_{t = t_0}(\frac{n}{N}) - \eta - \theta)(t - t_0)N}.$$  Further, for any $\iota > 0$ we can choose $t$ small enough so that $$\frac{1 - \iota}{(\frac{\partial f_t}{\partial t}|_{t = t_0}(\frac{n}{N}) + \eta + \theta)(t - t_0)N} \leq (T_t)_n.$$  The number $\iota$ comes in because we have to consider how far to the right of $\frac{n}{N}$ the first iterate in $I_n$ is.  Multiplying everything by $t - t_0$, we get

$$\frac{1 - \iota}{(\frac{\partial f_t}{\partial t}|_{t = t_0}(\frac{n}{N}) + \eta + \theta)N} \leq (t - t_0)(T_t)_n \leq \frac{1}{(\frac{\partial f_t}{\partial t}|_{t = t_0}(\frac{n}{N}) - \eta - \theta)N}.$$

For any $\kappa > 0$, we can therefore say that $$(t - t_0)(T_t)_n \in ((1 - \kappa)\frac{1}{\frac{\partial f_t}{\partial t}|_{t = t_0}(\frac{n}{N})} \cdot \frac{1}{N}, (1 + \kappa)\frac{1}{\frac{\partial f_t}{\partial t}|_{t = t_0}(\frac{n}{N})} \cdot \frac{1}{N}),$$ provided we made $\eta, \theta,$ and $\iota$ small enough.  This says that $$(t - t_0)T_t = (t - t_0)(T_t)_0 + \ldots + (t - t_0)(T_t)_{N - 1} \in ((1 - \kappa)S_N, (1 + \kappa)S_N),$$ and since $S_N$ is bounded above (independent of $N$), for sufficiently large $N$ and $t$ close enough to $t_0$ we have $(t - t_0)T_t \in (S_N - \epsilon, S_N + \epsilon)$.

Finally, we must address the case where $\rho(t_0) = \frac{p}{q} \neq 0$ (assume this fraction is reduced).  Then we consider the family $g_t$, where $g_t = f_t^q$ for all $t$.  Then $g_{t_0}$ has rotation number $0$.  The above reasoning applied to the family $g_t$ says that

$$\frac{d}{dt}|_{t = t_0} \rho(g_t) = \frac{1}{\int_{S^1} \frac{1}{\frac{\partial g_t}{\partial t}(x, t_0)} dx}.$$

But $\frac{\partial g_t}{\partial t}(x, t_0) = \frac{\partial f_t^q}{\partial t}(x, t_0)$, and $\frac{d}{dt}|_{t = t_0} \rho(g_t) = q \cdot \frac{d}{dt}|_{t = t_0} \rho(f_t)$, yielding the desired formula.
\end{proof}


\begin{thebibliography}{9}

\bibitem{Brunov}
Pavol Brunovsk\'{y}.
Generic properties of the rotation number of one-paramter diffeomorphisms of the circle. \emph{Czech. Math. J.}, 24(99), 1974, 74-90.

\bibitem{Ghys}
\'{E}tienne Ghys.
Groups acting on the circle.
\emph{L'Enseignement Math\'{e}matique}, Vol. 47 (2001), p. 329-407.

\bibitem{Herman2}
Michael R. Herman.
Mesure de Lebesgue et nombre de rotation.
\emph{Lecture Notes in Math.}, No. 597 Springer Verlag, 271-293 (1977).

\bibitem{Herman}
Michael R. Herman.
Sur la conjugaison diff\'{e}rentiable des diff\'{e}omorphismes du cercle \`{a} des rotations.
\emph{Publications math\'{e}matiques de l'I.H.\'{E}.S.}, tome 49 (1979), p. 5-233.

\bibitem{Matsumoto}
Shigenori Matsumoto.
Derivatives of rotation number of one parameter families of circle diffeomorphisms.
\emph{http://arxiv.org/abs/1103.2591}

\end{thebibliography}
\end{document}